\documentclass[18pt]{amsart}
\usepackage{hyperref}
\usepackage[usenames]{color}
\usepackage{amsmath,amsthm,amsfonts,amssymb}

\newtheorem{theorem}{Theorem}[section]
\newtheorem{lemma}[theorem]{Lemma}

\newtheorem{proposition}[theorem]{Proposition}
\newtheorem{problem}[theorem]{Problem}

\theoremstyle{definition}
\newtheorem{definition}[theorem]{Definition}

\theoremstyle{remark}

\numberwithin{equation}{section}



\newcommand{\vr}{\noindent \color{red}}






\begin{document}

\title{On the Andrews-Curtis groups: non-finite presentability}\footnote{The research was   funded by Sobolev Institute of Mathematics,  project  FWNF-22-0003.}

\author{Vitaly Roman'kov}



\begin{abstract}
The Andrews-Curtis conjecture  remains one of the outstanding open problems in combinatorial group theory. It claims that every normally generating $r$-tuple of a free group $F_r$ of rank $r\geq 2$ can be reduced to a basis by means of Nielsen transformations and arbitrary conjugations. These transformations generate the so-called Andrews-Curtis group AC($F_r$). The groups AC($F_r$) ($r = 2, 3, \ldots$) are actively investigated and allows various generalizations, for which there are a number of results. At the same time, almost nothing is known about the structure and properties of the original groups AC($F_r$). In this paper we define a class $\{A_{r, s}: r, s \geq 1\}$ of generalized Andrews-Curtis groups in which any group $A_{r,r}$ is isomorphic to  the Andrews-Curtis group  AC($F_r$). 
We prove that  every  group $A_{2,s}$\, ($s \geq 1$) is non-finitely presented.
Hence the Andrews-Curtis group AC($F_2$) $\simeq A_{2,2}$  is non-finitely presented.  Thus, we give a partial answer to the well-known question about the finite presentability of the groups AC($F_r$), explicitly stated by J. Swan and A. Lisitsa in the Kourovka notebook \cite{KN} 
(Question 18.89). 
\end{abstract}

\maketitle


\section{Introduction}

\subsection{Andrews-Curtis groups}
Further $F_r$ means a free group of rank $r$. In this paper we prove that the Andrews-Curtis group  AC($F_2$)  is non-finitely presented. In fact, we first  show that  the widely considered general Andrews-Curtis groups GAC($F_r$) are isomorphic to the corresponding groups AC($F_r$). Second, we prove that any Andrews-Curtis group AC($F_r$) is isomorphic to a member $A_{r,r}$ of an infinite series of finitely generated groups $\{A_{r,s}: r, s \geq 1\}$. At last, we prove that   each of groups $A_{2, s}$  is  non-finitely presented. It follows that AC($F_2$) $\simeq A_{2,2}$ is non-finitely presented.  We also discuss some interesting properties of Andrews-Curtis groups  AC$(F_r)$,\, $r \geq 2$, and related open problems.

The Andrews-Curtis groups AC($F_r$) for $r\geq 2$ were introduced  in   their connection with the famous  Andrews-Curtis Conjecture (ACC)   named after James J. Andrews and Morton L. Curtis who proposed it in 1965 \cite{AC}. The ACC claims that every balanced presentation of the trivial group can be reduced to the standard one by a finite sequence of ``elementary AC-transformations", which are Nielsen transformations augmented by conjugations. This problem is of interest in topology as well as in group theory. A topological interpretation of this conjecture was given in the original paper by Andrews and Curtis \cite{AC}.

See also \cite{Met}. For any $r\geq 2$, the AC-transformations generate the group AC($F_r$), which acts on the set NGen($F_r$) of all normally generating $r$-tuples $u=(u_1, \ldots , u_r)\in F_r^r$. AC-transformations also act on the set $F_r^r$ of all $r$-tuples, where they generate the general Andrews-Curtis group GAC($F_r$).   

These definitions generalize naturally from the group $F_r$ to any  group $G$.  

To explain, fix a natural number $r \geq 2$. The following are \emph{elementary Andrews-Curtis transformations} (or \emph{elementary AC-moves}) on the set $G^r$ of $r$-tuples of $G$:
 \begin{itemize}
 \item  $R_{ij}^{\pm 1}$: $(u_1, \ldots, u_i, \ldots,  u_r) \mapsto (u_1,
\ldots,u_iu_j^{\pm 1}, \ldots,  u_r), \, i \ne j;$
  \item  $L_{ij}^{\pm 1}$: $(u_1, \ldots, u_i, \ldots, u_r)   \mapsto  (u_1,
\ldots,u_j^{\pm 1} u_i, \ldots,  u_r),\   i \ne j;$
 \item $I_{i}$\,\,\,\,\,\,:  $(u_1, \ldots, u_i, \ldots, u_r)  \mapsto  (u_1,
\ldots,u_i^{-1}, \ldots, u_r);$
 \item $C_{i,w}$: $(u_1, \ldots, u_i,
\ldots, u_r) \mapsto (u_1,\ldots,u_i^{w}, \ldots, u_r ), \
w\in G.$
 \end{itemize}
 \noindent (For $g, f \in G$\, $g^f$ means $fgf^{-1}$.) 
 
Transformations   $R_{ij}^{\pm 1}, L_{ij}^{\pm 1}, I_i$ are  called \emph{elementary Nielsen transformations}. Composition of a  finite sequence of elementary Andrews-Curtis moves is called  an  {\it AC-transformation}.  Each inverse of an  AC-transformation is AC-transformation.  
 
Every such AC-move gives a bijection on the set $G^r$ of all $r$-tuples  of elements of $G$, which is an element of the symmetric group $Sym(G^r)$. The subgroup generated by all the AC-moves in $Sym(G^r)$ is called the \emph{general AC-group of $G$ of dimension $r$} and is denoted by $GAC_r(G)$. The set $NGen_r(G)$ of all $r$-tuples in $G^r$ that generate $G$ as a normal subgroup, is invariant under  all AC-moves, so every AC-move gives a bijection of the set $NGen_r(G)$. The subgroup generated by all the AC-moves in $Sym$(NGen$_r$($G$)) is called the \emph{AC-group of $G$ of dimension $r$} and is denoted by AC$_r$($G$).  The group AC$_r$($F_r$) is the true Andrews-Curtis group of $F_r$  directly related to ACC in $F_r$. However, the set NGen$_r$($F_r$) may have a much more  complicated structure than the set $F_r^r$, in this regard it would be easier to study the group GAC$_r$($F_r$). Observe, that the restriction of an element $\alpha \in$  GAC$_r$($F_r$), viewed as a bijection on $F_r^r$, gives  a bijection $\bar{\alpha}$ on NGen$_r$($F_r$), which is an element of AC$_r$($F_r$). It is easy to see that the map $\alpha \to \bar{\alpha}$ gives a homomorphism $\phi: $ GAC$_r$($F_r$) $\to$ AC$_r$($F_r$), which is onto. Of course, we are interested only in the case when the set $NGen_r(G)$ is not empty.

 Note that the set Gen$_r$($G$) of generating $r$-tuples of $G$
 is closed under Nielsen moves, while the set NGen$_r$($G$) of normal-generating $r$-tuples of $G$ is closed under AC-moves.

Two tuples $u, v \in G^r$  are called  \emph{AC-equivalent}  (symbolically  $u \sim_{AC} v$) if there is  an AC-transformation which moves $u$ to $v$. 

If $G$ is finitely generated, then it suffices to have a finite number of  elementary AC-moves, where the element $w$ in $C_{i,w}$ or its inverse $w^{-1}$ belongs to a fixed finite set generating $G$.

\medskip
\noindent
\subsection{Andrews-Curtis Conjecture}  
{\it Let $F_r$ be a free group with basis $f = (f_1, \ldots,f_r)$. Then  a tuple of elements
  $u = (u_1, \ldots, u_r)$ generates $F_r$ as a normal subgroup if and only if
   $u\sim_{AC} f$.}

\medskip

To state ACC in  the original form recall that a presentation is called {\it balanced} if the  number of generators is equal to the number of relators.  Furthermore, a balanced presentation  $\langle f \mid u\rangle$, where  $f = (f_1, \ldots, f_r), u =  (u_1, \ldots, u_r)$  is called \emph{AC-trivializable} if  $u \sim_{AC} f$. The original ACC states that every balanced presentation of the trivial group is trivializable.

In 1985 Akbulut and Kirby \cite{AK} came up with a series of 
``potential counterexamples" to ACC:
 $$
AK(n) =  \langle x,y| ~x^n = y^{n+1}, ~xyx=yxy\rangle, ~n \ge 2.
 $$
Presentations $AK(n)$ are balanced  presentations of the trivial group.  Akbulut and Kirby   conjectured that they are not trivializable, i.e., 
that the pair of its relators $(x^ny^{-n-1},xyxy^{-1}x^{-1}y^{-1})$ is not AC-equivalent to the pair of generators $(x,y)$.  It turned out later that the presentation $AK(2)$  is AC-trivializable (see {\vr \cite{MM}}), so $AK(2)$ is not a counterexample to ACC.  The question whether or not the presentations $AK(n)$ with $n >2$ are trivializable is still open despite an ongoing effort by the research community (see \cite{MMS}). Note that currently $AK(3)$ is the shortest (in the total length of relators) potential counterexample to ACC.  Indeed, 
in \cite{HR} Havas and Ramsay proved that if $\langle x,y \mid  u= 1, v = 1 \rangle$ is a presentation of the trivial group with  $|u| + |v| \leq 13$ then either
$(u,v) \sim_{AC} (x,y)$
or
$(u,v) \sim_{AC} (x^3y^{-4}, xyxy^{-1}x^{-1}y^{-1})$.

 Other infinite series of potential counterexamples are given in \cite{BM}, \cite{MS} and in \cite{MMS}.  Finally, we mention a positive solution of a similar problem for free solvable groups  by A. Myasnikov \cite{M}. See papers \cite{BM} and \cite{MMS} for more details and some particular results. 

Currently there are  several approaches to resolve  ACC. One is based on the following observation: if $r$-tuples $u$ and $v$ in a free group $F_r$ are AC-equivalent then for any group homomorphism $\phi:F_r \to G$ one has $\phi(u) \sim_{AC} \phi(v)$ in $G$.  Therefore, to show, for example, that $AK(3)$ is a counterexample to ACC it suffices to find a group $G$ and a pair of elements $a, b \in G$ such that $(a,b)  \not \sim_{AC} (a^3b^{-4}, abab^{-1}a^{-1}b^{-1})$ in $G$. This brought up an interesting research on Andrews-Curtis Conjecture in arbitrary groups $G$. Observe, that ACC can be easily reformulated in a direct way for an arbitrary group $G$, by saying that any two $r$-tuples of elements from $G$ that generate $G$ as a normal subgroup  are AC-equivalent in $G$. However, this straightforward version of ACC immediately fails for some  groups, in particular,  for some finitely generated abelian groups with torsion (see \cite{BKM}), and as a consequence,  for some groups $G$ with torsion in their abelianization. Fortunately, one can slightly adjust the formulation  of ACC to accommodate   torsion in the abelianization of $G$. It turns out that the resulting version of ACC, termed the \emph{generalized Andrews-Curtis Conjecture}, holds in nilpotent and solvable groups \cite{M}, \cite{G}, finite groups \cite{BLM}, and some other groups. Up to now, this approach provides more and more groups satisfying the generalized ACC, and no counterexamples to the original one.

Another approach calls to study the group structure of AC-transformations  and apply this knowledge to ACC. Almost nothing is known in this direction and our goal here is to present some results on algebraic structure of the group of AC-transformations of  NGen$_r$($F_r$).

The group AC$_r$($F_r$) for $r\geq 2$ is the classical AC-group that was introduced in relation with ACC.  In Section \ref{sec:2} we prove the following result on AC-groups. 

\medskip
\noindent
{\bf Theorem A.}
\label{th:A}
 {\it For any $r\geq 2$ the homomorphism $\phi:$ GAC$_r$($F_r$) $\to$ AC($F_r$) is an isomorphism.
}

\medskip
In  Section \ref{sec:3}  we define a class $\{A_{r, s}: r, s \geq 1\}$ of generalized Andrews-Curtis groups in which any group $A_{r,r}$ is isomorphic to  the Andrews-Curtis group  AC($F_r$). Section \ref{sec:4} is devoted to preliminary results. In Section \ref{sec:5} the following main results are proved. 

\medskip
\noindent
{\bf Theorem B.}
\label{th:B}  {\it The groups $A_{2,s}, s \geq 1,$ are not finitely presentable. }

Since $AC(F_2)$ is antiisomorphic to $A_{2,2}$ we get the  following result that solves the 
Problem 18.89 posed  by J. Swan and A.P.  Lisitsa in the Kourovka Notebook \cite{KN}. 

{\bf Corollary C.}  {\it The group $AC(F_2)$ is not finitely presentable.
}

Section \ref{sec:6} contains open problems proposed in collaboration with  A.G. Myasnikov. 

\medskip

\section{AC-groups and automorphisms of free groups }
\label{sec:2}

There is a deep and interesting relationship between the AC-groups $AC(F_r)$ and automorphisms of free groups.

Let $G$ be a group, and Aut($G$) the automorphism group of $G.$ Denote by IAut($G$) the subgroup of Aut($G$) consisting of those automorphisms of $G$ which induce the identity map on the commutator quotient group ({\it  abelianization}) $G/G'.$ 

As above $F_r$ is  a free group of rank $r$ with basis $f = (f_1, ..., f_r)$.  
 J. Nielsen, in 1924 (compare \cite{Niel}, \cite{MKS}), using hyperbolic geometry gave finite sets of generators and relations for the automorphism group Aut($F_r$).  Specifically Aut($F_r$) is generated by automorphisms of the following three forms (all indexes $i, j, l$ range over the set $\{1, ..., r\}$ subject only the condition  that $i\not= j,$ every automorphism  changes exactly one free generator):
\begin{itemize}
\item $\rho_{i,j} : f_i \mapsto f_if_j, f_l \mapsto f_l $ for $l \not= i,$
\item $ \lambda_{i,j}: f_i \mapsto f_jf_i, f_l \mapsto f_l$  for $l\not= i,$ 
\item $\iota_i\, \,\,\,\,: f_i \mapsto f_i^{-1}, f_l\mapsto f_l$ for $l\not= i.$
\end{itemize}
  It was generally agreed  that both Nielsen's methods and his results were complicated (see \cite{MKS}, p. 164), and the subject did not progress for long time. In 1974 J. McCool \cite{McCool} has found  a new finite presentation for Aut($F_r$). McCool's generators were the Whitehead's automorphisms and his relations were of two types: the relations among the integral monomial $r \times r$ matrices and relations he called R1 - R6 according to two types of the Whitehead's automorphisms \cite{Whi}. See details in \cite{LS}.

 It was  shown by W. Magnus \cite{Mag34}, using a work of J. Nielsen \cite{Niel1}, that IAut($F_r$) has a finite generating set of the following two forms (all indexes $i, j, l$ range over the set $\{1, ..., r\}$ subject only the condition  that $i\not= j, k; j\not= k$ every automorphism  changes exactly one free generator):
\begin{itemize}
\item $\xi_{i,j}: f_i \mapsto f_i^{f_j}, f_l \mapsto f_l $ for $l \not= i,$
\item $\rho_{i,j,k}: f_i \mapsto f_i[f_j, f_k], f_l \mapsto f_l $ for $l \not= i$
\end{itemize}
 Here $[g,h]$ denotes  commutator $ghg^{-1}h^{-1}$ of elements $g, h.$ The statement includes the result of J. Nielsen \cite{Niel} that IAut($F_2$) = Inn($F_2$), the subgroup of inner automorphisms of $F_2.$

Since the group IAut($F_2$) is isomorphic to Inn($F_2$) and so to $F_2,$ it is  finitely  presented. W. Magnus \cite{Mag34} raised the question on the finite presentability of IAut$(F_r)$ for $r \geq 3.$  
The question attracted the attention of researchers, among them O. Chein \cite{Chein}, S. Bachmuth \cite{Bach}, J. Smillie and K. Vogtman \cite{SV}.

The first result in solving this problem has been obtained by S. Krsti\'c and J. McCool.

\begin{theorem}
\label{th:1.1}
(\cite{KrMc}.) Let $G=F_3/N$, where $N$ is a characteristic subgroup of $F_3$ such that $N$ is contained in the second derived subgroup $F_3''$
of $F_3.$ Let $\bar{\xi}_{1,2}, \bar{\xi}_{1,3}, \bar{\rho}_{1,2,3}$ be the elements of Aut($G$) induced by $\xi_{1,2}, \xi_{1,3}, \rho_{1,2,3}$ respectively. Then any subgroup of IAut($G$) which contains the set $\{\bar{\xi}_{1,2}, \bar{\xi}_{1,3}, \bar{\rho}_{1,2,3}\}$ is not finitely presentable. 
\end{theorem}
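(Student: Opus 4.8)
The plan is to linearise the situation via the Magnus--Bachmuth representation, to identify the resulting matrix group explicitly as a metabelian group, and then to deduce non-finite-presentability from the structure of its derived module, using the fact that a retract of a finitely presentable group is finitely presentable.

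\emph{Linearisation.} Since $N\subseteq F_3''$ and $N$ is characteristic, the subgroup $F_3''/N$ is characteristic in $G$, so there is a homomorphism $\mathrm{IAut}(G)\to\mathrm{IAut}(M_3)$, where $M_3=F_3/F_3''$ is the free metabelian group of rank $3$, carrying $\bar\xi_{1,j}$ and $\bar\rho_{1,2,3}$ to the corresponding automorphisms of $M_3$. I would compose this with Bachmuth's faithful Jacobian (Fox-derivative) embedding $\mathrm{IAut}(M_3)\hookrightarrow\mathrm{GL}_3(\Lambda)$, $\Lambda=\mathbb{Z}[t_1^{\pm1},t_2^{\pm1},t_3^{\pm1}]$, and compute the images: each of the three elements fixes the second and third coordinates, so its image is a matrix of block form $\left(\begin{smallmatrix}a&b&c\\0&1&0\\0&0&1\end{smallmatrix}\right)$, and explicitly $\bar\xi_{1,2}\mapsto(t_2;\,1-t_1,\,0)$, $\bar\xi_{1,3}\mapsto(t_3;\,0,\,1-t_1)$, $\bar\rho_{1,2,3}\mapsto(1;\,t_1(1-t_3),\,t_1(t_2-1))$.

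\emph{Identifying the group, and the case $N=F_3''$.} Let $\bar K\le\mathrm{GL}_3(\Lambda)$ be the subgroup generated by these three matrices. Projection to the $a$-entry is an epimorphism $\bar K\twoheadrightarrow\langle t_2,t_3\rangle\cong\mathbb{Z}^2$ whose kernel consists of matrices $(1;b,c)$ with $b(t_2-1)+c(t_3-1)=0$; since $(t_2-1,t_3-1)$ is a regular sequence in $\Lambda$, this syzygy module is free of rank one over $\Lambda$ with generator the class of $(1-t_3,\,t_2-1)$, and under this identification $\bar\rho_{1,2,3}$ becomes $t_1$, $[\bar\xi_{1,2},\bar\xi_{1,3}]$ becomes $1-t_1$ (a short bracket computation), and the conjugation action of $\bar K$ on the kernel is multiplication by $t_2$, resp.\ $t_3$ (and is trivial for $\bar\rho_{1,2,3}$, which lies in the abelian kernel). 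Hence $\bar K\cap\ker$ is exactly the $\mathbb{Z}[t_2^{\pm1},t_3^{\pm1}]$-submodule of $\Lambda$ generated by $1$ and $t_1$, a free $\mathbb{Z}[\mathbb{Z}^2]$-module of rank $2$, and $\bar K\cong\mathbb{Z}^2\ltimes\big(\mathbb{Z}[\mathbb{Z}^2]\big)^{2}$ with $\mathbb{Z}^2$ acting by the regular representation on each summand. Killing one summand retracts $\bar K$ onto $\mathbb{Z}^2\ltimes\mathbb{Z}[\mathbb{Z}^2]=\mathbb{Z}\wr\mathbb{Z}^2$, and further onto the lamplighter $\mathbb{Z}\wr\mathbb{Z}$; since a wreath product of two infinite groups is not finitely presentable while retracts of finitely presentable groups are finitely presentable, $\bar K$ is not finitely presentable (equivalently, by the Bieri--Strebel criterion the metabelian module $\bar K'/\bar K''$ fails to be $m$-tame). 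When $N=F_3''$ the map $K_G:=\langle\bar\xi_{1,2},\bar\xi_{1,3},\bar\rho_{1,2,3}\rangle\to\bar K$ is an isomorphism because Bachmuth's representation of $\mathrm{IAut}(M_3)$ is faithful, so this already settles the theorem for $G=M_3$.

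\emph{General $N$ and general $H$.} For arbitrary $N\subseteq F_3''$ the map $K_G\to\bar K$ is still onto but need no longer be injective — already for $N=1$ the element $[[\xi_{1,2},\xi_{1,3}],\rho_{1,2,3}]$ is nontrivial in $\mathrm{Aut}(F_3)$ (the conjugator $f_3^{-1}f_2^{-1}f_3f_2$ it produces on $f_1$ does not commute with $[f_2,f_3]$ in the free group $F_3'$), while its image in $\bar K$ is trivial. So I would need to control the kernel $\mathcal{N}=\ker(K_G\to\bar K)=K_G\cap\{\phi:\bar f_1\mapsto\bar f_1\bar w,\ \bar w\in G''\}$: the point to prove is that $\mathcal{N}$ is finitely generated as a normal subgroup of $K_G$, for then finite presentability of $K_G$ would force the non-finitely-presentable quotient $\bar K$ to be finitely presentable. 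Finally, for an arbitrary subgroup $K_G\le H\le\mathrm{IAut}(G)$, I would aim to locate a non-finitely-presentable subgroup $L$ — a suitable copy of $\mathbb{Z}\wr\mathbb{Z}$ — sitting inside $K_G$ that is simultaneously a retract of all of $\mathrm{IAut}(G)$; then, since $L\subseteq K_G\subseteq H$, any retraction $\mathrm{IAut}(G)\to L$ restricts to a retraction $H\to L$ (its image lies in $L\subseteq H$ and it is the identity on $L$), and $H$ cannot be finitely presentable. The hard part will be exactly these two claims — the finite normal generation of $\mathcal{N}$ and the existence of a uniform retraction onto $L$; everything else is the explicit Jacobian computation of the first two steps together with the standard facts about retracts and wreath products.
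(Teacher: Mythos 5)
Your linearisation step is exactly the right input: the Jacobi matrices you compute agree with (\ref{eq:4.6})--(\ref{eq:4.8}), and the group $\bar K$ they generate is indeed not finitely presentable. (Though not quite by your mechanism: the extension $1\to M\to\bar K\to\mathbb{Z}^2\to 1$ does not split --- a complement would require solving $(t_2-1)m'-(t_3-1)m=t_1-1$ in $M$, which the augmentation $t_2,t_3\mapsto 1$ rules out --- and killing the summand $\mathbb{Z}[t_2^{\pm1},t_3^{\pm1}]\,t_1$ gives a \emph{quotient}, namely the free metabelian group $M_2$, not a retract onto $\mathbb{Z}\wr\mathbb{Z}^2$. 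This is repairable: the kernel of $\bar K\twoheadrightarrow M_2$ is normally generated by the single element $\bar\rho_{1,2,3}$, and $M_2$ is non-finitely-presentable, so $\bar K$ is too.) The genuine gap is that the two steps you defer to the end are not loose ends but the entire content of the theorem, and the mechanisms you propose for them do not close it. First, finite normal generation of $\mathcal N=\ker(K_G\to\bar K)$ is nowhere established and is at least as hard as the statement being proved; without it, non-finite-presentability of the quotient $\bar K$ says nothing about $K_G$. Second, and more fatally, the theorem concerns \emph{every} subgroup $H$ with $K_G\le H\le \mathrm{IAut}(G)$; non-finite-presentability passes neither to overgroups nor to subgroups, and the uniform retraction of all of $\mathrm{IAut}(G)$ onto a copy of $\mathbb{Z}\wr\mathbb{Z}$ inside $K_G$ that your plan requires has no reason to exist (the Bachmuth map is not a retraction, and no candidate retraction is exhibited).

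The device that closes both gaps at once --- and the route the paper imports from Krsti\'c--McCool and summarizes in Section \ref{sec:4} --- is the notion of an \emph{essentially infinite set of relations}. One exhibits explicit words $w_k$ in the three generators that are relations of $\mathrm{IAut}(G)$, and pushes them forward along the homomorphism $\bar\eta$ of (\ref{eq:10}) (which factors through $\mathrm{IAut}(G)$ precisely because $N\subseteq F_3''$) onto the set $Q_1$ or $Q_2$ of Lemma \ref{le:2} in $PGL_2(\mathbb{Z}[t,t^{-1}])$. Two formal properties then finish the proof: an essentially infinite set of relations pulls back along a commutative square to an essentially infinite set (so the kernel $\mathcal N$ never needs to be controlled), and if a group has an essentially infinite set of relations supported on a finite subset $S$ of its generators, then no subgroup containing the image of $S$ is finitely presentable (so arbitrary overgroups $H$ come for free). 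Your matrix computations feed directly into this argument; what is missing from the proposal is the replacement of ``quotients and retracts'' by ``essentially infinite relations,'' and that replacement is what makes the theorem true in the stated generality.
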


This statement covers the cases of the free group $F_3,$ of the free metabelian group $M_3 = F_3({\mathcal A}^2),$ and more generally every relatively free group 
$F_3( {\mathcal C})$ of rank $3$ where ${\mathcal C}$ is a variety of groups containing the variety ${\mathcal A}^2$ of all metabelian groups.

Let $F_{r,s}$ be a free group of finite rank $r+s$ with a set of free generators $f \cup y$, where $f = \{f_1, \ldots , f_r\}, y = \{y_1, \ldots , y_s\}$. We assume that $r \geq 1$  and $s\geq 0$. The group $\tilde{A}_{r,s}$ is defined as a subgroup of the automorphism group Aut($F_{r,s}$) generated by the automorphism group Aut($F_r$), more precisely, the subgroup of all automorphisms fixing the generators $y$ in  $F_{r,s}$, and mapping the subgroup generated by the elements of $f$ (isomorphic to the group $F_r$) onto itself,
together with automorphisms $\xi_{i,k}$ for $i = 1, ..., r$ and $k = 1, ..., s,$ that are defined on the basic elements as follows:
\begin{equation}
\xi_{i,k} : f_i \mapsto f_i^{y_k},   f_l \mapsto f_l \  \textrm{and}\    y_t \mapsto y_t, \ \textrm{for} \  i, l = 1, ... , r; \  l \not= i,  \  
  \textrm{and} \   k, t = 1, ..., s.
\end{equation}
Thus the group $\tilde{A}_{r,s}$ is isomorphic to a subgroup of the group Aut($F_{r,s}$).  
Then $\tilde{A}_{r,s}$ acts on its orbit $O_{r,s}$ generated by all tuples of the form $(u_1, \ldots , u_r, y_1, \ldots , y_s),$, where $u = ( u_1, \ldots , u_r) \in F_r^r$. Denote by $O_r$ the set of corresponding truncated tuples $u$. Then $\tilde{A}(r, s)$ also acts on the set $O_r$. Of course, $\tilde{A}_{r, s}$ also acts on the set $F_{r,s}^r\supseteqq O_r.$ 

Let us define the AC-transformation group $A_{r,s}$, which naturally corresponds to the group $\tilde{A}_{r,s}$.
To explain we need a more detailed notation of AC-moves in $F_{r,s}$. 
For any tuple of the form $u = (u_1, \ldots , u_r, y_1, \ldots , y_s)\in F_{r,s}^{r+s}$ we put (all moves change exactly one component of the tuple):
\begin{itemize}\item AC$_1$($i, j$) \    \   replaces $u_i$ by $u_iu_j,$ \  where $1 \leq i, j \leq r, i\not=j.$
\item AC$_2$($i,j$) \      \  replaces $u_i$ by $u_ju_i,$\  where $1 \leq i, j \leq r, i\not=j.$
\item AC$_3$($i$) \ \ replaces $u_i$ by $u_i^{-1}$, \  where $1 \leq i \leq r.$
\item AC$_4$($i,k$) \  replaces $u_i$ by $u_i^{y_k},$  \  where $1 \leq i \leq r\, 1 \leq k \leq s.$
\end{itemize}
By definition, the group $A_{r,s}$ is generated by these  transformations. Then this group also acts on the orbit $O_{r,s}$, on $O_r$ and on $F_{r,s}^r$.  
   It is well known that   $A_{r,s}$  is antiisomorphic to $\tilde{A}_{r,s}$ via the following map:
 \begin{itemize}
 \item AC$_1(i,j) \mapsto \rho_{i,j},$
 \item AC$_2(i,j) \mapsto \lambda_{i,j},$
 \item AC$_3(i) \mapsto \iota_i,$
 \item AC$_4(i,k) \mapsto \xi_{i,k}.$
 \end{itemize}  
The proof one can find in the monograph in the monograph \cite{MKS} (p. 130) or in the monograph \cite{LS} (section ``Automorphisms of free groups'').

\section{Proof of Theorem A}
\label{sec:3}

Let $s = r$ and $A_r$ denote the restriction of $A_{r,r}$ to  $O_r.$
Theorem A is obtained as a consequence of the following assertions, which have independent significance.

\begin{lemma}
\label{le:2.1}
For any $\alpha \in A_r$, if $\alpha (x) = x$, where $x = (x_1, \ldots , x_r)$ is a basis of $F_r$, then $\alpha$ acts identically to $F_{r,s}^r$, in particular to $O_r$.   Moreover, if we change in each transformation of the form AC$_4$($i, k$)  the conjugator $y_k$ by $x_k$  in $\alpha$ and denote the resulting transformation by $\beta $, then $\beta $ is identical on $F_r^r.$ Hence $\nu :\alpha \mapsto \beta$ is a homomorphism $A_r \rightarrow $ GAC$_r$($F_r$).
\end{lemma}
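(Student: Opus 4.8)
The plan is to exploit the standard action of AC-moves on tuples together with the free-group normal form, and to track carefully what happens when the distinguished conjugators $y_k$ are replaced by $x_k$. First I would recall the dictionary from the previous section: an element $\alpha \in A_r$ is a word in the generators $\mathrm{AC}_1(i,j), \mathrm{AC}_2(i,j), \mathrm{AC}_3(i), \mathrm{AC}_4(i,k)$, and via the antiisomorphism of Section~2 it corresponds to an automorphism $\tilde\alpha \in \tilde A_{r,r} \le \mathrm{Aut}(F_{r,r})$. The key observation is that for a tuple $u = (u_1,\dots,u_r,y_1,\dots,y_r) \in O_{r,r}$, the result of applying $\alpha$ to $u$ in its first $r$ coordinates is obtained by substituting $f_i \mapsto u_i$, $y_k \mapsto y_k$ into the words $\tilde\alpha(f_1),\dots,\tilde\alpha(f_r)$; this is precisely the content of how $\tilde A_{r,r}$ acts on its orbit $O_{r,r}$. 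So $\alpha$ as a transformation of $O_r$ is completely determined by the $r$-tuple of words $\bigl(\tilde\alpha(f_1),\dots,\tilde\alpha(f_r)\bigr) \in F_{r,r}^r$, together with the prescription of how $u_i$ is substituted for $f_i$.

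Next I would analyze the hypothesis $\alpha(x) = x$. Here $x = (x_1,\dots,x_r)$ is a basis of $F_r$; evaluating at the tuple $(x_1,\dots,x_r,y_1,\dots,y_r) \in O_{r,r}$ and using the substitution description above, $\alpha(x) = x$ says exactly that $\tilde\alpha(f_i)$, after the substitution $f_j \mapsto x_j$ (and $y_k \mapsto y_k$), equals $x_i$ for every $i$. Since the $x_j$ form a basis of $F_r$ and the $y_k$ are free generators independent of the $x_j$ inside $F_{r,r}$, this substitution is injective on the subgroup generated by $f_1,\dots,f_r,y_1,\dots,y_r$ — indeed it is an isomorphism sending that subgroup identically (mapping $f_j\mapsto x_j$, $y_k\mapsto y_k$ is just a renaming of free generators). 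Therefore $\tilde\alpha(f_i) = f_i$ in $F_{r,r}$ for all $i$, i.e.\ $\tilde\alpha = \mathrm{id}$. But then for any tuple $u = (u_1,\dots,u_r,y_1,\dots,y_r)$, applying $\alpha$ substitutes $f_i \mapsto u_i$ into $\tilde\alpha(f_i) = f_i$, giving $u_i$ back; hence $\alpha$ is the identity on $O_{r,r}$, and a fortiori on $O_r$ and on all of $F_{r,r}^r$ since the same substitution argument works for arbitrary first-$r$ coordinates.

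For the ``moreover'' part I would argue analogously with the modified transformation $\beta$. Write $\alpha$ as an explicit word $w$ in the four families of generators; then $\beta$ is obtained from $w$ by replacing each letter $\mathrm{AC}_4(i,k)$ by the elementary AC-move on $F_r^r$ that conjugates the $i$-th coordinate by $x_k$ (a genuine AC-move on $F_r$ because $x_k \in F_r$), and by reinterpreting $\mathrm{AC}_1,\mathrm{AC}_2,\mathrm{AC}_3$ as the corresponding Nielsen moves on $F_r^r$. This reinterpretation is exactly the homomorphism sending the generators of $A_r$ to the generators of $\mathrm{GAC}_r(F_r)$ described in Section~2 after one identifies $y_k$ with $x_k$; what must be checked is that it is well-defined on $A_r$, i.e.\ that any relation holding among the four families in $A_r$ continues to hold after the substitution $y_k \rightsquigarrow x_k$. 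But a relation $w = \mathrm{id}$ in $A_r$ means $w$ fixes every tuple in $O_{r,r}$; by the substitution description, $\beta = w|_{y_k \mapsto x_k}$ fixes every tuple in $F_r^r$ iff substituting $f_i \mapsto u_i$ into the words obtained from $\tilde w$ by setting $y_k = x_k$ returns $u_i$ — and since $\tilde w$ fixes $(f_1,\dots,f_r,y_1,\dots,y_r)$ it certainly fixes its image under $y_k \mapsto x_k$, namely $(f_1,\dots,f_r,x_1,\dots,x_r)$, viewed in $F_r$ after the $x_k$ are absorbed as elements of the ambient free group on $f_1,\dots,f_r,x_1,\dots$. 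The cleanest way to phrase this is: the specialization $F_{r,r} \to F_r * F_r' \to F_r$ (or simply the retraction $y_k \mapsto x_k$) is a group homomorphism, and applying a homomorphism to both sides of $\tilde w(f_i) = f_i$ preserves the equality; hence $\beta$ is identity on $F_r^r$ whenever $\alpha$ is identity on $O_{r,r}$. Consequently $\nu: \alpha \mapsto \beta$ is well-defined, and it is visibly a homomorphism since it is defined on generators and respects composition of the defining words. The main obstacle, and the point to be most careful about, is the well-definedness of $\nu$: one must ensure that passing from the ambient group $F_{r,r}$ to $F_r$ by the substitution $y_k \mapsto x_k$ never collapses a relation that is \emph{needed} or, conversely, creates an inequality — the retraction argument above is what rules this out, using that $x_1,\dots,x_r$ is a basis so that $F_r$ is genuinely a quotient (indeed a retract) and not something degenerate.
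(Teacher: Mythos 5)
Your proof is correct and follows essentially the same route as the paper's: both rest on the equivariance of AC-moves under substitution homomorphisms ($f_j \mapsto u_j$, resp.\ $y_k \mapsto x_k$), with the hypothesis that $x$ is a basis making the relevant substitution injective so that $\alpha(x)=x$ forces formal triviality. The paper states this directly as $\varphi(\alpha(x))=\alpha(\varphi(x))$ for $\varphi\colon x_i\mapsto v_i$ (and $\mu\colon y_k\mapsto x_k$ for the $\beta$ part), whereas you route through establishing $\tilde\alpha=\mathrm{id}$ in $\mathrm{Aut}(F_{r,r})$ first --- a cosmetic difference only, though your explicit treatment of the well-definedness of $\nu$ is more careful than the paper's.
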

\begin{proof}
Let $v = (v_1, \ldots , v_r) \in F_{r,s}^r$. We define a homomorphism $\varphi : F_r^r \rightarrow 
F_{r,s}^r, \, x_i \mapsto v_i \, (i = 1, \ldots , r).$ Then 
$$\varphi (\alpha (x)) = \alpha (\varphi (v)) = \alpha (v) = \varphi (x) = v.$$

Moreover, since $\alpha (x) = x$ all $y_k$ in $\alpha$ cancel among themselves. Then  the elements $x_k$ corresponding to $y_k$ in $\beta$  cancel among  themselves too. Define homomorphism $\mu : F_{r,r} \rightarrow F_r$ such that $\varphi (x_i) = v_i, \varphi (y_k) = x_k$ for $i, k = 1, \ldots , r.$ 
 Then $\mu (\alpha (x)) =  \alpha (\mu (x)) = \beta (v)  = v. $
 Then $\beta$ is  identical as element of GAC$_r$($F_r$) and $\nu $ is a homomorphism.  
\end{proof}

\begin{lemma}
\label{le:2.2}
If $\alpha \in A_r$ is not identical on $O_r$ then the transformation $\beta\in A_{r,r}$ derived from $\alpha$ as in Lemma \ref{le:2.1}    is not identical on  $F_r^r$. Moreover, $\beta$ is not identical on NGen$_r$($F_r$).  
It follows that  $A_{r,r}$ is isomorphic to  GAC$_r$($F_r$) and AC($F_r$). Therefore, GAC$_r$($F_r$) $\simeq$ AC($F_r$). 
\end{lemma}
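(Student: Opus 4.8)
The plan is to establish the contrapositive-flavored statement: a nontrivial element of $A_r$ yields a nontrivial element $\beta$ of $A_{r,r}$ acting on $F_r^r$, and then upgrade ``nontrivial on $F_r^r$'' to ``nontrivial on $\mathrm{NGen}_r(F_r)$''. Combined with Lemma \ref{le:2.1}, which supplies the homomorphism $\nu : A_r \to \mathrm{GAC}_r(F_r)$ whose kernel consists exactly of elements acting trivially on $O_r$, this shows $\nu$ is injective, hence an isomorphism onto its image; and since the generators AC$_1,\dots,$AC$_4$ of $A_r$ map to the corresponding generators $R_{ij}, L_{ij}, I_i, C_{i,x_k}$ of $\mathrm{GAC}_r(F_r)$ (all of which are among the AC-moves), $\nu$ is onto. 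Finally, composing with the surjection $\phi : \mathrm{GAC}_r(F_r) \to \mathrm{AC}(F_r)$ from the introduction and checking $\phi$ is injective on the image gives all the claimed isomorphisms, and in particular $\mathrm{GAC}_r(F_r) \simeq \mathrm{AC}(F_r)$, which is Theorem A.

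First I would suppose $\alpha \in A_r$ is not identical on $O_r$, so there is a tuple $u = (u_1,\dots,u_r) \in F_r^r$ with $\alpha(u) \neq u$ inside $F_{r,s}^r$. The element $\beta \in A_{r,r}$ is obtained from $\alpha$ by the syntactic substitution $y_k \rightsquigarrow x_k$ in every AC$_4$-move; the point is that this substitution is realized by the homomorphism $\mu : F_{r,r} \to F_r$ with $x_i \mapsu_i$, $y_k \mapsto x_k$ used in the proof of Lemma \ref{le:2.1}, so that $\mu(\alpha(u\text{-lift})) = \beta(u)$. The key step here is to show $\beta$ is still nontrivial: if $\alpha$ moves the specific tuple $u$, one wants a tuple on which $\beta$ acts nontrivially. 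Because $\alpha$ as a word in the generators acts on $O_r$ by a fixed composite of AC-moves, and the $y_k$ only ever appear as conjugators, the natural choice is to feed $\beta$ a free basis or a suitably generic tuple; then nontriviality of $\beta$ on $F_r^r$ follows from nontriviality of $\alpha$ on $O_r$ by applying an appropriate $\mu$-type homomorphism and observing it does not collapse the discrepancy $\alpha(u)u^{-1}$. This is the main obstacle: one must be careful that passing from $\alpha$ to $\beta$ via the substitution does not accidentally trivialize the action, and I expect the argument to run by choosing a sufficiently free test tuple (e.g., a basis of a large free group mapped in) so that no unwanted cancellation occurs.

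The upgrade from $F_r^r$ to $\mathrm{NGen}_r(F_r)$ is the remaining piece. Here I would use that the canonical basis $f = (f_1,\dots,f_r)$ lies in $\mathrm{NGen}_r(F_r)$ and that $\mathrm{NGen}_r(F_r)$ is $\mathrm{GAC}_r$-invariant; via the antiisomorphism $A_{r,s} \leftrightarrow \tilde A_{r,s}$ and the fact that elements of $\tilde A_{r,s}$ restrict to automorphisms of $F_r$, the action of $\beta$ on the orbit of $f$ is faithfully detected already on normally generating tuples — indeed the orbit of $f$ under $\mathrm{GAC}_r(F_r)$ sits inside $\mathrm{NGen}_r(F_r)$, and if $\beta$ fixed every element of $\mathrm{NGen}_r(F_r)$ it would in particular fix that orbit, forcing $\beta = \mathrm{id}$ by Lemma \ref{le:2.1} applied in reverse. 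Chasing these identifications, together with the surjectivity of $\phi$ already noted in the introduction, yields $A_{r,r} \simeq \mathrm{GAC}_r(F_r) \simeq \mathrm{AC}(F_r)$, completing the proof of Theorem A.
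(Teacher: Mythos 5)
Your outline correctly identifies where the difficulty sits, but it does not resolve it, and the shortcut you propose for the $\mathrm{NGen}_r(F_r)$ step does not work. The heart of the lemma is this: after the substitution $y_k \rightsquigarrow x_k$, the conjugators of the AC$_4$-moves become elements of $F_r$ itself, so Lemma \ref{le:2.1} no longer applies to $\beta$ --- its proof depends essentially on the $y_k$ being free generators independent of the tuple being acted on, so that any tuple $v\in F_r^r$ is the image of the basis under a homomorphism fixing the conjugators. For $\beta$ this fails: for example the single move $C_{1,f_1}$ fixes the basis $(f_1,f_2)$ but moves $(f_2,f_1)$. Hence ``$\beta$ fixes $f$'' does not force ``$\beta=\mathrm{id}$'', and your appeal to ``Lemma \ref{le:2.1} applied in reverse'' is exactly the step that is unavailable. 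What the paper does instead is an explicit cancellation argument: if $\alpha$ contains $m$ occurrences of AC$_4$-moves or their inverses (so the conjugator contributed to any coordinate after the substitution has bounded length), one tests the transformation on the normally generating tuple $u$ with $u_1=f_2^{2m}f_1f_2^{2m},\ldots,u_r=f_1^{2m}f_rf_1^{2m}$; the coordinates are long enough that the bounded-length conjugators cannot cancel more than $m$ letters from either side of any entry, so $\beta(u)\neq u$ with $u\in\mathrm{NGen}_r(F_r)$. Your proposal gestures at ``a sufficiently free test tuple \dots\ so that no unwanted cancellation occurs'' but never produces one nor proves the non-cancellation, and that is precisely the content of the lemma rather than a detail.

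A second, smaller point: the case analysis matters. When all occurrences of the $y_k$ in the formal expression of $\alpha$ cancel among themselves, the corresponding $f_k$ in $\beta$ cancel too and $\beta(f)=\alpha(f)\neq f$ immediately; the length argument above is only needed in the residual case, where the substitution $y_k\mapsto x_k$ could in principle create new cancellations and make $\beta$ fix the basis. Your write-up collapses these two situations into one vague step. The surrounding structural remarks (that $\nu$ sends generators to generators, and that composing with the surjection $\phi$ yields the chain of isomorphisms) are fine and agree with the paper, but they are the easy part.
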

\begin{proof}
  Since $\alpha$ is not identical on $O_r$, then by lemma \ref{le:2.1} $\alpha$ is not identical on $F_r^r$. Then  $\alpha (f) \not = f.$ Suppose that   all occurences of $y_1, \ldots , y_r$ in formal expression of $\alpha$ cancel among themselves. Then all corresponding occurences of $f_1, \ldots , f_r$ in $\beta = \nu (\alpha )$ also cancel among themselves. Then $\beta (f) \not= f$ and $\beta$ is non-trivial. 
  
Now suppose that not all occurences of  $y_1, \ldots , y_r$ in formal expression of $\alpha$ cancel among themselves and $\beta (f) = f$.   
  Suppose that $\alpha$ includes $m$  occurences of AC$_4$-transformations or their inverses. Denote  $u = (u_1, \ldots , u_r)$, where  $u_1=f_2^{2m}f_1f_2^{2m},u_2=f_3^{2m}f_2f_3^{2m}, \ldots , u_{r-1} = f_r^{2m}f_{r-1}f_r^{2m}, 
u_r=f_1^{2m}f_rf_1^{2m}$.  Then elements $u$ generate a free subgroup of rank $r.$ Obviously, the reduction process for components of elements coming from AC$_4$-transformations of $\alpha (u)$ does not cancel more than $m$ letters from a side of  any entry of $u_i$ for every $i.$  It follows that $\alpha (u)\not= u.$  Note that $u \in $ NGen($F_r$). Thus $\beta$ is non-trivial as an element of AC($F_r$). Therefore $A_r \simeq $ GAC($F_r)\simeq $ AC($F_r$).  
\end{proof}  

Therefore Theorem A is proved.

\section{Auxiliary  results}
\label{sec:4}

\subsection{Essentially infinite sets of relations}

In this Section we are following to \cite{KrMc}. Let $F(X)$ be the free group with basis $X,$  and let $\mu : F(X) \rightarrow G$ be a group epimorphism. If $S$ is a subset of $X,$  then
elements of ker$(\mu ) \cap $ gp($S$) are called {\it relations of $G$ on the generators $S$}. If $Q$ is a set of such relations, we say that $Q$ is {\it essentially infinite} if there is no finite subset $W$ of ker($\mu $) such that $Q$ is contained in the normal closure ncl($W$) of $W$ in $F(X).$

It was shown in \cite{KrMc} that this notion is independent of the choice of the generating set $X$ of $X,$ containing $S.$ Also, it was noted that if $Q$ is an essentially infinite set of relations of $G$ on the finite set $S,$ then no subgroup $H$ of $G$ containing the image $\mu (S)$ can be finitely presentable. Indeed, if $H$ is finitely presentable, then there is a finite preimage in $F(X)$, containing $S$,  of a generating set of $H$.  Since $H$ is finitely presentable on this set of generating elements, it would follow that $Q$ is contained in the norml closure of a finite set of relations of $G$, which is a contradiction.  

Furthemore, it was proved in \cite{KrMc} that if we have two free groups $F(X_1)$ and $F(X_2)$,  two subsets $S_1\subseteq X_1$ and $S_2\subseteq X_2$ and

\begin{equation}
\begin{array}{ccc}
 \psi' : F(S_1) & \rightarrow  & F(S_2)\\
 \downarrow  &&\downarrow  \\
\psi : G_1 & \rightarrow & G_2\\
\end{array}
\end{equation}

\noindent
be a commutative diagram of groups,    $Q \subseteq F(S_1) $ be a set of relations of $G_1,$ $Q_2 = \psi' (Q_1)$ be an essential infinite set of relations of $G_2,$ then $Q_1$ is an essential infinite set of relations of $G_1.$

\subsection{Fox derivatives}

For a given positive integer $r,$ we consider the free  group $F_r$ with basis  $\{f_1, ..., f_r\}$ and the integral group ring $\mathbb{Z}F_r.$ 
We use the partial derivatives introduced by Fox \cite{Fox}. An exellent introduction to the theory of the Fox derivatives and possible applications of them can be found in \cite{CF} 
 (see also \cite{Romess} and \cite{Timbook}). In our notation, these are defined as follows.

For $j = 1, ..., r,$ the (left) Fox derivative associated with $f_j$ is the linear map $D_j :\mathbb{Z}F_r \rightarrow \mathbb{Z}F_r$ satisfying the conditions

\begin{equation}
\label{eq:der1}
 D_j(f_j) = 1, D_j(f_i) = 0 \   \textrm{for} \     i \not= j
\end{equation}

\noindent
and

\begin{equation}
 \label{eq:der2}
D_j(uv) = D_j(u) + uD_j(v) \  \textrm{for all} \  u, v \in F_r.
\end{equation}

Obviously, an element $u \in F_r$ is trivial if and only if $D_i(u) = 0$ for all $i = 1, ..., r.$ Also note that for an arbitrary element  $g $ of $ F_n$ and every $j = 1, ..., n,$ $D_j(g^{-1}) = -g^{-1}D_j(g).$

The  {\it  trivialization} homomorphism  $\varepsilon : \mathbb{Z}F_r \rightarrow \mathbb{Z}$   is defined on the generators of $F_n$ by $\varepsilon (f_i) = 1$ for all $i = 1, ..., r$
and extended linearly to the group ring $\mathbb{Z}F_n.$

The  Fox derivatives appear in another setting as well. Let $\Delta F_r$ denote the fundamental ideal of the group ring $\mathbb{Z}F_r.$
It is a free left $\mathbb{Z}F_r-$module with a free basis consisting of $\{f_1-1, ..., f_r-1\}.$ This it leads us  to
the following formula which is called the {\it main identity} for the Fox derivatives:

\begin{equation}
 \label{eq:mainf}
\sum_{i=1}^{r} D_i(\alpha )(f_i - 1) = \alpha - \varepsilon (\alpha ) ,
\end{equation}

\noindent
where $\alpha \in \mathbb{Z}F_r.$ Conversely, if for any element $f \in  F_r$ and $\alpha_i  \in \mathbb{Z}F_r$ we have equality

\begin{equation}
 \sum_{i=1}^{r} \alpha_i (f_i - 1) = f - 1,
\end{equation}

\noindent
then $D_i(f) = \alpha_i$ for $i = 1, ..., r.$

More generally, we call a linear map $D: \mathbb{Z}F_r \rightarrow \mathbb{Z}F_r$
the {\it  free Fox derivative}  if $D$ satisfies the property

\begin{equation}
 D(uv) = D(u) + uD(v)
\end{equation}

\noindent
for all $u, v \in F_r.$ Every such derivative has the form

\begin{equation}
 D = \alpha_1 D_1 + ... + \alpha_n D_r,
\end{equation}

\noindent
where $\alpha_i = D(f_i)$ for $i = 1, ..., r.$ By definition $(\alpha D) (u) =  D(u)\alpha $ for any $\alpha \in \mathbb{Z}F_r, u \in F_r.$
Conversely, we can define a derivative $D = \sum_{i=1}^r \alpha_iD_i$ for arbitrary tuple of elements $\alpha_i \in \mathbb{Z}F_r.$

The notion of the  free Fox derivatives, defined above for free  groups, can be generalized  to groups 
of the type  $F_r/R',$  where $R$ is any normal subgroup of $F_r.$  First we give some notation.

Let $F_r$ be a free group with basis $\{f_1, ..., f_r\}.$ Let  $R$ be a normal subgroup of $F_r$ and that 
$R'$ is its derived subgroup (it is also normal subgroup of $F_r).$  Let $\bar{G} = F_r/R',$  $G = F_r/R$
and let $\mathbb{Z}G$ be  the group ring of $G.$ By $\mu : F_r  \rightarrow G$ we denote the standard epimorphism, as well as its linear extension to $\mu : \mathbb{Z}F_r \rightarrow \mathbb{Z}G.$  By $\bar{\mu} : \mathbb{Z}\bar{G} \rightarrow \mathbb{Z}G$ we mean the epimorphism induced by $\mu $.  Also we denote by $\mu ': F_r \rightarrow \bar{G}$ the other standard epimorphism. Obviously $\mu $ is a superposition of $\mu '$ and $\bar{\mu }.$

An abelian normal subgroup $\bar{R} = R/R'$ of the group $\bar{G}$ is considered as a module over the group ring $\mathbb{Z}G$
where action of $\mathbb{Z}G$ on $\bar{R}$ is induced by conjugation in $\bar{G}.$ This module is called the {\it relation module} of
$\bar{G}.$

Every free Fox partial derivative $D_j$ induces    a linear map $d_j : \mathbb{Z}F_r \rightarrow \mathbb{Z}G$ via $\mu .$ These linear maps $d_j$ also are called the {\it free Fox derivatives} (or the {\it free partial  derivatives}). Every $d_j$ is well defined via $\bar{\mu }$ on  $\mathbb{Z}\bar{G}.$ 

An arbitrary element  $u$ in $\bar{G}$  is trivial if and only if $d_i(u) = 0$ for all $i = 1, ..., r.$
Thus  two elements $u, v$  are equal in $\bar{G}$ if and only if $d_i(u) = d_i(v)$ for all $i = 1, ..., r.$

The main identity for the Fox derivatives (\ref{eq:mainf})  gives a similar one in the considered case too:

\begin{equation}
 \label{eq:maingen}
\sum_{i=1}^{r} d_i(\alpha )(\mu (f_i) - 1) = \bar{\mu}(\alpha )  - \varepsilon (\bar{\mu} (\alpha)),
\end{equation}

\noindent
where $\alpha \in \mathbb{Z}\bar{G}$ and $\varepsilon : \mathbb{Z}G \rightarrow \mathbb{Z}$ is the trivialization homomorphism  on $\mathbb{Z}G.$

Let $u \in \bar{R}$ and $\alpha \in \mathbb{Z}G.$ Then
\begin{equation}
 \label{eq:derualpha}
d_i(u^{\alpha }) = \alpha d_i(u) \  \textrm{for all} \  i = 1, ..., r.
\end{equation}

\subsection{Matrix representations}

\paragraph{Magnus representations for automorphism groups.} 
\label{ss:mrae}

Let $F_r$ be a free group with basis $\{f_1, ..., f_r\}.$ 

\begin{definition}
 \label{Magnusrepraut}
The {\it Magnus representation} for Aut$(F_r)$ is the map 
\begin{equation}
 \label{eq:Mra}
\alpha : \mathrm{Aut}(F_r) \rightarrow GL_r(\mathbb{Z}F_r)
\end{equation}

\noindent
assigning to $\varphi \in \mathrm{Aut}(F_r)$ the Jacobi matrix 

\begin{equation}
 \label{eq:jm}
J(\varphi ) = (D_j(\varphi (f_i))
\end{equation}

\noindent
where $D_j$ are the free Fox derivatives for $j = 1, ..., r,$ and   $D_j(\varphi (f_i))$ is the $ij-$th entry of the matrix.
\end{definition}

The Magnus representation $\alpha $ for Aut$(F_r)$ is injective since $\varphi (f_i)$ is recovered from $J(\varphi )$ by applying the main identity for free Fox derivatives  to the $i-$th row for each $\varphi \in \mathrm{Aut}(F_r).$

The Magnus representation is not homomorphism as is seen from the following assertion. 

\begin{proposition}
 \label{eq:tw}
For $\varphi , \psi \in \mathrm{Aut}(F_r),$ the equality

\begin{equation}
 \label{eq:semi}
J(\varphi \psi ) = \psi (J(\varphi )) \cdot J(\psi ) 
\end{equation}
\noindent
holds, where $\psi (J(\varphi ))$ means that $\psi $ is applied to every entry in $J(\varphi ).$ 
\end{proposition}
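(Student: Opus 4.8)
The plan is to deduce Proposition~\ref{eq:tw} from the \emph{chain rule} for free Fox derivatives; once that is in hand, the matrix identity (\ref{eq:semi}) is just a matter of reading off entries. Concretely, I would first establish that for every endomorphism $\psi$ of $F_r$, every index $j$, and every $w\in F_r$,
\begin{equation}
\label{eq:chain}
D_j(\psi(w))=\sum_{k=1}^{r}\psi\big(D_k(w)\big)\,D_j(\psi(f_k)),
\end{equation}
where $\psi$ is extended linearly to $\mathbb{Z}F_r$. Both sides of (\ref{eq:chain}), viewed as functions of $w$ on $F_r$, satisfy the twisted Leibniz identity $h(uv)=h(u)+\psi(u)h(v)$: for the left-hand side this is immediate from (\ref{eq:der2}) applied to $D_j(\psi(u)\psi(v))$, and for the right-hand side it follows from $D_k(uv)=D_k(u)+uD_k(v)$ after applying $\psi$ and factoring out $\psi(u)$. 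A map $h:F_r\to\mathbb{Z}F_r$ obeying $h(uv)=h(u)+\psi(u)h(v)$ necessarily sends $1$ to $0$ and $g^{-1}$ to $-\psi(g^{-1})h(g)$, so by induction on reduced word length it is completely determined by its values on the free generators $f_1,\dots,f_r$. Since both sides of (\ref{eq:chain}) take the value $D_j(\psi(f_k))$ at $w=f_k$ (using that $D_l(f_k)$ is $1$ for $l=k$ and $0$ otherwise), they coincide on all of $F_r$.

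Granting (\ref{eq:chain}), fix $\varphi,\psi\in\mathrm{Aut}(F_r)$ and adopt the composition convention under which $(\varphi\psi)(f_i)=\psi(\varphi(f_i))$. Applying (\ref{eq:chain}) with $w=\varphi(f_i)$ gives
\begin{equation}
D_j\big((\varphi\psi)(f_i)\big)=\sum_{k=1}^{r}\psi\big(D_k(\varphi(f_i))\big)\,D_j(\psi(f_k)).
\end{equation}
Here $D_k(\varphi(f_i))$ is the $(i,k)$ entry of $J(\varphi)$, so $\psi\big(D_k(\varphi(f_i))\big)$ is the $(i,k)$ entry of $\psi(J(\varphi))$, while $D_j(\psi(f_k))$ is the $(k,j)$ entry of $J(\psi)$; hence the sum on the right is precisely the $(i,j)$ entry of the matrix product $\psi(J(\varphi))\cdot J(\psi)$. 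Since $D_j\big((\varphi\psi)(f_i)\big)$ is by definition the $(i,j)$ entry of $J(\varphi\psi)$, this establishes (\ref{eq:semi}).

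I expect the only genuinely delicate point to be the verification in the first paragraph that both sides of the chain rule obey the \emph{same} twisted Leibniz rule, together with the uniqueness statement for such maps, which rests on freeness of $F_r$ and the forced value on inverses. Everything after that is bookkeeping: one must only keep the composition convention consistent so that in $\psi(J(\varphi))\cdot J(\psi)$ the automorphism $\psi$ is applied to the entries of the correct factor, namely $J(\varphi)$. A less efficient alternative would be to check (\ref{eq:semi}) by hand on the Nielsen generators $\rho_{i,j},\lambda_{i,j},\iota_i$ and then argue that the twisted-multiplicativity relation is stable under composition, but the chain-rule argument is shorter and more conceptual.
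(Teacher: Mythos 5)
Your argument is correct. The paper states this proposition without proof (it is a classical fact of Fox's free differential calculus), and your derivation via the chain rule $D_j(\psi(w))=\sum_k \psi(D_k(w))\,D_j(\psi(f_k))$ — proved by checking that both sides satisfy the same twisted Leibniz identity $h(uv)=h(u)+\psi(u)h(v)$ and agree on the free generators — is the standard way to establish it. You are also right that the formula as printed forces the left-to-right composition convention $(\varphi\psi)(f_i)=\psi(\varphi(f_i))$; under the opposite convention one would instead get $J(\varphi\psi)=\varphi(J(\psi))\cdot J(\varphi)$, so making that convention explicit, as you do, is the one point of care the statement genuinely requires.
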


In particular, it follows that  the image of $\alpha $ is contained in the group $GL_r(\mathbb{Z}F_r)$ of invertible matrices.

To obtain a genuine representation, a homomorphism, we need  to change $F_r$ to a group $\bar{G}$ of type $F_r/R',$ where $R$ is a normal subgroup in $F_r.$ Let $G = F_r/R$ and $\bar{G} = F_r/R'.$  Then $\bar{R} = R/R'$ is normal abelian subgroup of $\bar{G},$ which is considered as a module over the group ring $\mathbb{Z}G.$ Let $\{x_1, ..., x_r\}$ be the generators of $\bar{G}$ corresponding to the basic elements $\{f_1, ..., f_r\}$ of $F_r.$ 

\begin{definition}
 \label{reprgen}
The {\it Magnus representation for} Aut$(\bar{G})$ is the map 
\begin{equation}
 \label{eq:Mra}
\alpha_A : \mathrm{Aut}(\bar{G}) \rightarrow GL_r(\mathbb{Z}G)
\end{equation}

\noindent
assigning to $\varphi \in \mathrm{Aut}(\bar{G})$ the Jacobi matrix $J(\varphi ) = (d_j(\varphi (x_i) )$ of $\varphi $ 
over $\mathbb{Z}G.$ Here $d_j$  are the induced free Fox derivatives on $\bar{G}$ with values in $\mathbb{Z}G$ with respect to generators $x_i$ for $i, j = 1, ..., r,$  and $d_j(\varphi (x_i))$ is the $ij-$th entry in the matrix.
\end{definition}

For $R$ a normal subgroup of a group $H,$ let IRAut$(H)$ denote the group of all automorphisms of  $H$ for which $R$ is invariant, 
and which induce the identical map on the quotient $H/R.$

Then the map  $\alpha_{AR} : \mathrm{IRAut}(\bar{G}) \rightarrow GL_r(\mathbb{Z}G)$  induced by the map $\alpha_A $ is   homomorphism.

Let $M_r$ be the free metabelian group of rank $r$ with basis $\{x_1, ..., x_r\},$  and $A_r = M_r/M_r'$ be the abealization of $M_r$ with the corresponding basis $\{a_1, ..., a_r\}.$
The group ring $\mathbb{Z}A_r$ can be considered as a Laurent polynomial ring $\Lambda_r = \mathbb{Z}[a_1^{\pm 1}, ..., a_r^{\pm 1}]. $  Let $\psi $ be an automorphism of $M_r.$ 
We define the Jacobi matrix $J( \psi )$ corresponding to $\psi $ as above.
Then the map
\begin{equation}
 \beta : \psi \rightarrow J(\psi )
\end{equation}
\noindent
gives an injective homomorphism (embedding) of $\mathrm{IAut}(M_r)$ into the group of matrices of size $r$ over the Laurent polynomial ring 
$\Lambda_r .$ This  homomorphism is called {\it Bachmuth's embedding}.

Every automorphism $\varphi \in $ Aut$(F_r)$ induces an automorphism $\bar{\varphi } \in $ Aut$(M_r).$ Let $r = 3.$ We compute the matrices $J(\bar{\xi}_{1,2}), J(\bar{ \xi}_{1,3})$ and $J(\bar{\rho}_{1,2,3})$ where $\bar{\xi}_{1,3}, \bar{\xi}_{2,3}, \bar{\rho}_{1,2,3}$ are induced by $\xi_{1,3}, \xi_{2,3}, \rho_{1,2,3} \in $ IAut$(F_3),$ respectively.

\begin{equation}
\label{eq:4.6}
J(\bar{\xi}_{1,2}) = \left( \begin{array}{ccc}
a_2 & 1-a_1 & 0\\
0&1&0\\
0&0&1\\
\end{array}\right),
\end{equation}

\begin{equation}
\label{eq:4.7}
J(\bar{\xi}_{1,3}) = \left( \begin{array}{ccc}
a_3 &  0  & 1-a_1 \\
0&1&0\\
0&0&1\\
\end{array}\right),
\end{equation}

\begin{equation}
\label{eq:4.8}
J(\bar{\rho}_{1,2,3}) = \left( \begin{array}{ccc}
1 &a_1( 1-a_3) & a_1(a_2-1)\\
0&1&0\\
0&0&1\\
\end{array}\right).
\end{equation}

Let $\eta : \Lambda_3 \rightarrow \mathbb{Z}[t, t^{-1}]$ be a homomorphism of the Laurent rings defined by the map 

\begin{equation}
\label{eq:4.9}
\eta : a_i \mapsto 1 \  \mathrm{for} \  i = 1,2; a_3 \mapsto t.
\end{equation}

Now we can define by ( \ref{eq:4.6} - \ref{eq:4.9}) a homomorphism $\bar{\eta} : $ IAut$(F_3) \rightarrow GL_2(\mathbb{Z}[t, t^{-1}])$ such that

\begin{equation}
\label{eq:10}
\bar{\eta}(\xi_{1,2}) = 1, \bar{\eta}(\xi_{1,3}) = \left( \begin{array}{cc}
t & 0\\
0&1\\
\end{array}\right),  \bar{\eta}(\rho_{1,2,3}) = \left(\begin{array}{cc}
1& 1 -t\\
0&1\\
\end{array}\right).
\end{equation}

The following lemma, which has been proved in \cite{KrMc}, provides essentially infinite sets of relations of $PGL_2(\mathbb{Z}[t, t^{-1}]).$ We write $d, e_0, u$ for the matrices

\begin{equation}
d =  \left( \begin{array}{cc}
t & 0\\
0&1\\
\end{array}\right), e_0 = \left( \begin{array}{cc}
1 & 1\\
0&1\\
\end{array}\right), u = \left( \begin{array}{cc}
1 & t-1\\
0&1\\
\end{array}\right).
\end{equation}

\begin{lemma}
\label{le:2}
 \cite{KrMc}. Let $K$ be a field and let $Q$ be any infinite subset of 

\begin{equation}
\label{eq:Q_1}
Q_1 = \{[e_0, d^ke_0d^{-k}] | k \geq 1\}
\end{equation}

\noindent
or

\begin{equation}
Q_2 = \{ d^k(ud)^{-k}d^k(u^{-1}d)^{-k} | k \geq 1 \}.
\end{equation}

Then $Q$ is an essentially infinite set of relations in $PGL_2(K[t, t^{-1}]).$
\end{lemma}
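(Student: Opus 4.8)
The plan is to reduce, via the machinery of essentially infinite sets of relations recalled above, to a finiteness question about the relation module of $G:=PGL_{2}(K[t,t^{-1}])$, and to settle that by Fox calculus together with a support estimate; the support estimate is the content of \cite{KrMc}, which I would invoke.

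\textbf{Step 1: the words are relations, and $Q_{2}$ reduces to $Q_{1}$.} Since $d^{k}=\operatorname{diag}(t^{k},1)$ we have $d^{k}e_{0}d^{-k}=\left(\begin{smallmatrix}1&t^{k}\\0&1\end{smallmatrix}\right)$, and any two upper unitriangular matrices over the commutative ring $K[t,t^{-1}]$ commute, so $[e_{0},d^{k}e_{0}d^{-k}]$ is the identity of $G$ and $Q_{1}$ consists of relations. For $Q_{2}$ one checks the elementary identities $ud=e_{0}^{-1}de_{0}$ and $u^{-1}d=e_{0}de_{0}^{-1}$, so $(ud)^{-k}=e_{0}^{-1}d^{-k}e_{0}$, $(u^{-1}d)^{-k}=e_{0}d^{-k}e_{0}^{-1}$, and
\[ d^{k}(ud)^{-k}d^{k}(u^{-1}d)^{-k}=(d^{k}e_{0}^{-1}d^{-k})\,e_{0}\,(d^{k}e_{0}d^{-k})\,e_{0}^{-1}, \]
again a commutator of $e_{0}$ with a unitriangular matrix, hence trivial. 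After substituting $u=e_{0}^{-1}de_{0}d^{-1}$ the $Q_{2}$-relators become $F$-conjugates of the $Q_{1}$-relators, so, since the notion of an essentially infinite set of relations is independent of the chosen generating set (as recalled above), it suffices to treat $Q_{1}$. Write $w_{k}=[E,D^{k}ED^{-k}]$ in a free group $F$ on a set $X\ni D,E$, with an epimorphism $\mu\colon F\twoheadrightarrow G$ sending $D,E$ to $d,e_{0}$, and $N=\ker\mu$.

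\textbf{Step 2: reduction to the relation module.} Regard $N/[N,N]$ as a left $\mathbb{Z}G$-module via the conjugation action of $G=F/N$. If an infinite $Q\subseteq Q_{1}$ lay in $\operatorname{ncl}_{F}(W)$ for a finite $W\subseteq N$, then the classes in $N/[N,N]$ of the words of $Q$ would all lie in the finitely generated submodule $\mathbb{Z}G\cdot\overline{W}$. Hence it suffices to show: for every infinite sequence $k_{1}<k_{2}<\cdots$, the classes $\overline{w_{k_{1}}},\overline{w_{k_{2}}},\dots$ lie in no finitely generated $\mathbb{Z}G$-submodule of $N/[N,N]$.

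\textbf{Step 3: Fox calculus and the support estimate.} Using the Magnus embedding $N/[N,N]\hookrightarrow\bigoplus_{x\in X}\mathbb{Z}G$, $\overline{w}\mapsto(\mu D_{x}(w))_{x}$, the product rule together with $d^{k}e_{0}d^{-k}\cdot e_{0}=e_{0}\cdot d^{k}e_{0}d^{-k}$ in $G$ gives, for instance,
\[ \mu D_{E}(w_{k})=1+e_{0}d^{k}-d^{k}e_{0}d^{-k}-d^{k}, \]
and a longer expression in the $D$-coordinate built from $1+d+\cdots+d^{k-1}$; the point is that the class of $w_{k}$ ``reaches distance $k$'' from the identity. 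One then transports the $\overline{w_{k}}$ along a $\mathbb{Z}G$-module map to a module carrying a $\langle d\rangle$-shift with a degree function --- coming from the Borel $B=U\rtimes T$ stabilising $\infty\in\mathbb{P}^{1}(K(t))$, with $U\cong(K[t,t^{-1}],+)$ and the generator of $\langle d\rangle\le T$ acting by multiplication by $t$ --- in which $\overline{w_{k}}$ has degree comparable to $k$, and deduces that no finitely generated submodule can contain the whole family. This support estimate is the main obstacle, and it must be run inside the full $PGL_{2}(K[t,t^{-1}])$: the failure of finite generation cannot be detected in any Noetherian module, so the argument has to use a genuinely non-Noetherian module attached to $G$ --- morally, the relation module of its Borel, reflecting that the subgroup $\langle e_{0},d\rangle\le G$ is the wreath product $\mathbb{Z}\wr\mathbb{Z}$ (the lamplighter $(\mathbb{Z}/p)\wr\mathbb{Z}$ if $\operatorname{char}K=p$) and is already infinitely related precisely along $Q_{1}$. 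This is the computation carried out in \cite{KrMc}, which I would cite, and which together with Step 1 also gives the $Q_{2}$ case.
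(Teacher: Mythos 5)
The paper gives no proof of this lemma at all --- it is quoted verbatim from Krsti\'c--McCool \cite{KrMc} --- and your plan likewise defers the decisive step (that the classes $\overline{w_k}$ lie in no finitely generated $\mathbb{Z}G$-submodule of the relation module) to that same reference, so in substance you are taking the same route as the paper. Your added preliminary verifications are correct as far as they go ($d^k e_0 d^{-k}=\left(\begin{smallmatrix}1&t^k\\0&1\end{smallmatrix}\right)$, the identities $ud=e_0^{-1}de_0$ and $u^{-1}d=e_0de_0^{-1}$ reducing $Q_2$ to conjugates of $Q_1$-words, and $\mu D_E(w_k)=1+e_0d^k-d^ke_0d^{-k}-d^k$ all check out), but they do not constitute an independent proof, so the citation of \cite{KrMc} remains load-bearing exactly as in the paper.
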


Lemma \ref{le:2} has been used in\cite{KrMc} in establishing Theorem \ref{th:1.1}. We will use it in establishing our main result on non-finite presentability of the generalized Andrews-Curtis groups.

\section{Proof  of Theorem B}
\label{sec:5}

The homomorphism $\bar{\eta}$ defined by (\ref{eq:10})  can be naturally extended via the Magnus representation and setting $\xi_{i,j} \mapsto 1$ for $i = 1,2; j = 2, ..., m$ to a homomorphism 

\begin{equation}
\nu : A_{2,m}  \rightarrow PGL_2(\mathbb{Z}[t, t^{-1}]).
\end{equation}

Let $\lambda_{1,2} $ be an automorphism in $A_{2,m}$ which maps $x_1 \mapsto x_2x_1$ and fixes other generators. Also let $\rho_{1,2}$ be an automorphism in $A_{2,m}$ which maps $x_1 \mapsto x_1x_2$ and fixes other generators. Then $\lambda_{1,2}$ commutes with all automorphisms of the form $\xi_{2,3}^k\rho_{1,2}\xi_{2,3}^{-k}$ for every integer $k.$ 

By direct computation we get that 

\begin{equation}
\nu (\lambda_{1,2}) = e_0, \nu (\xi_{2,3}^k\rho_{1,2}\xi_{2,3}^{-k}) = d^ke_0d^{-k} \  \textrm{for every integer } \  k.
\end{equation}

Since the set $Q_1$ in (\ref{eq:Q_1})  is essentially infinite the set of relations $\{[\lambda_{1,2},\xi_{2,3}^k\rho_{1,2}\xi_{2,3}^{-k}] | k \in \mathbb{N}\}$ which maps onto $Q_1$ by $\nu $
is essentially infinite in $A_{2,m}.$ Hence the group $A_{2,m}$ is non-finitely presented.
 
Therefore Theorem B is proved. 

\medskip
\noindent
{\bf Remark} {\it
For any $m,$  $A_{1, m}$ is finite presentable, and for any $n,$ $A_{n,0}$ is finite presentable. 
}

Indeed,  for any $m,$  $A_{1, m}$ is isomorphic to  a direct product of groups $C_2 \times F_m,$ where $C_2 \simeq  \  \textrm{Aut}(F_1)$ is the cyclic group of order $2$ and $F_m=$
gp$(\xi_{1,1}, ..., \xi_{1,m})$  is the free group with basis $\xi_{1,1}, ..., \xi_{1,m}.$ Hence every group $A_{1, m}$ is a finitely presentable  group.

 \section{Open problems}
 \label{sec:6}

 In this section we state several open problems on Andrews-Curtis groups.
 
 \begin{problem}
 Are any of the groups $AC(F_r)$, $r >2$, finitely presentable?
 
 \end{problem}

 \begin{problem}
 For which normally $r$-generated groups $G$ the canonical epimorphism $\phi: GAC_r(G) \to AC(G)$ is an isomorphism? 
 In particular:
 \begin{itemize}
 \item Is $\phi: GAC_r(G) \to AC(G)$ an isomorphism for a torsion-free non-elementary hyperbolic group $G$?
 \item For which  $r$-generated partially commutative groups $G = G(\Gamma)$ the canonical epimorphism $\phi: GAC_r(G) \to AC(G)$ is an isomorphism?
 
 \end{itemize}
 \end{problem}

 \begin{problem}
 Find "good" (quasi-geodesic) normal forms of elements in $AC(F_r)$
 \end{problem}
 
 Solution to this problem will enhance efficacy of computations with ACC.
 
 \begin{problem}
 For which $r$ does the group $AC(F_r)$ have Kazhdan property (T)?
 \end{problem}

Aknowledgments.

The author is grateful to A. Myasnikov for invaluable help in preparing the paper.

\bigskip

 Author's information

Vitaly Roman'kov: 

Sobolev Institute of Mathematics (Omsk Branch), Pevtsova str. 13, 644099, Omsk, Russia.

E-mail: romankov48@mail.ru


\begin{thebibliography}{\hspace{0.5in}}

\bibitem{AC} J.  J. Andrews, M. L. Curtis, Free groups and handlebodies, Proc.  Amer. Math. Soc.  16\, (1965), pp. 192--195.

 \bibitem{AK} S. Akbulut, R. Kirby
A potential smooth counterexample in dimension 4 to the Poincaré conjecture, the Schoenflies conjecture, and the Andrews-Curtis conjecture
Topology, 24\, (1985), pp. 375-390.

\bibitem{Bach} S. Bachmuth,  Automorphisms of solvable groups, Part I. Proc. of Groups St. Andrews 1985, London Math. Soc. Lect. Notes Ser. 121, (Cambridge Univ. Press., 1986). 

\bibitem{BKM} A. V. Borovik, E. I. Khukhro, A. G. Myasnikov, The Andrews-Curtis Conjecture and black box groups, Int. J. Algebra and Computation, 13\,  No. 4\, (2003), 415--436.

\bibitem{BLM} A.V. Borovik, A. Lubotzky, A.G. Myasnikov,  The Finitary Andrews-Curtis Conjecture. In: Bartholdi, L., Ceccherini-Silberstein, T., Smirnova-Nagnibeda, T., Zuk, A. (eds) Infinite Groups: Geometric, Combinatorial and Dynamical Aspects. Progress in Mathematics, vol 248. Birkhäuser Basel.

\bibitem{BM} R. G. Burns, O. Macedonska, Balanced presentations of the trivial
group. Bull. London Math. Soc., 25\, (1993),  pp. 513--526.

\bibitem{Chein} O. Chein, Subgroups of IA-automorphisms of a free group, Acta Math., 123\, (1969), pp. 1--12.

\bibitem{CF} R. H. Crowell, R. H. Fox, Introduction to knot theory, Ginn and company, Boston-New York-Chicago-Atlanta-Dallas-Palo Alto-Toronto, 1963.


\bibitem{Fox} R.H. Fox, Free differential calculus I -- Derivation in the free group ring, Ann. Math., 57\, (1953), 547--560.

\bibitem{G} L. Guyot, On Andrews–Curtis conjectures for soluble groups, International J. of Algebra and Comput., 28\, (2018), 97--113.


\bibitem{HR} G. Havas and C. Ramsay, Breadth-first search and the Andrews-Curtis conjecture, International Journal of Algebra and Computation
 13\, (2003), pp. 61–68



\bibitem{KrMc} S. Krsti\'c, J. McCool, The non-finite presentability of IA$(F_3)$  and $GL_2(\mathbb{Z}[t, t^{-1}],$ Invent. Math., 129\, (1997), 595--606.

\bibitem{LS} R.C. Lyndon, P.E. Shupp, Combinatorial group theory, Springer-Verlag, Berlin, 2001.

\bibitem{Mag34} W. Magnus, \"{U}ber n-dimensionale Gittertransformationen, (German), Acta Math., 64 (1934), pp. 353--367.


\bibitem{MKS} W. Magnus, A. Karrass, and D. Solitar, Combinatorial group theory, New-York: Dover, 1976.


\bibitem{McCool} J. McCool, A presentation for the automorphism group of a free group of finite rank, J. London Math. Soc., 8\, (1974), 259--266. 

\bibitem{Met}  W. Metzler, On the Andrews-Curtis conjecture and related problems, Combinatorial methods in topology and algebraic geometry
Rochester, N.Y., 1982, Contemp. Math., 44, (Amer. Math. Soc.,
Providence, RI, 1985, 35--50).

\bibitem{MM} A. D. Miasnikov and A. G. Myasnikov, Balanced presentations of the trivial group on
two generators and the Andrews–Curtis conjecture, eds. W. M. Kantor and A. Seress,
Groups and Computation III, Ohio State Math. Res. Inst. Publ. 8, de Gruyter, 8,
Berlin, 2001,  257--263.

\bibitem{MMS} A.D. Myasnikov, A.G. Myasnikov, and V. Shpilrain.
On the Andrews-Curtis equivalence, Combinatorial and geometric group
theory, New York, 2000/Hoboken, NJ, 2001, Contemp. Math., 296,
Amer. Math. Soc., Providence, RI, 2002,  183--198.

\bibitem{MS} C.F.Miller and P. Schupp,  Some presentations of the trivial group, Contemp. Math., Amer. Math. Soc. 250\, (1999), 113--115

\bibitem{M} A. G. Myasnikov. Extended Nielsen transformations and the trivial group. Math. Notes, 35\, )1984), 491–495.
DOI: https://doi.org/10.1007/BF01139985


\bibitem{Niel} J. Nielsen. Die Isomorphismengruppe der freien Gruppen, (German), Math. Ann., 91,\, (1924), 169--209.  


\bibitem{Niel1} J. Nielsen. Die Gruppe der dreidimensionalen Gittertransformationen. Kgl. Danske Videnskabernes Selskab., (German), Math. Fysmeddelelser, 12,\, (1924), 1--29.  


\bibitem{Rom} V.A. Roman'kov. On generalized Andrews-Curtis groups (Russian), Herald of Omsk University, (2016), no. 4, 24--26.  

\bibitem{Romess} V. A. Roman'kov. Essays in algebra and cryptology. Solvable groups.  Omsk, Omsk State University Publishing House, 2017. 


\bibitem{SV} J. Smillie, K. Vogtman, A generating function for Euler characteristic of $Out(F_n)$, J. Pure Appl. Algebra, 44\, (1987), 329--348.

\bibitem{Timbook}
E.I. Timoshenko, Endomorphisms and universal theories of solvable groups (Russian), Novosibirsk, Novosibirsk State Technical University ,  2011.  

\bibitem{KN} Unsolved problems in group theory. The Kourovka Notebook. Editors: E.I. Khukhro and V.D.Mazurov. 19\,  (2018), Novosibirsk, Sobolev Institute of Mathematics, 2018.
 
\bibitem{Whi} J. H. C. Whitehead, On equivalent sets of elements in a free group, Ann. Math., 37\,  (1936), 782--800.



\end{thebibliography}
\end{document}